\newtheorem{thm}{Theorem}
\newtheorem{lem}[thm]{Lemma}
\newtheorem{prop}[thm]{Proposition}
\newtheorem{cor}[thm]{Corollary}
\theoremstyle{definition}
\newtheorem{dfn}[thm]{Definition}
\newtheorem{exm}[thm]{Example}
\theoremstyle{remark}
\newtheorem{rem}[thm]{Remark}
\newcommand{\exmsymbol}{\hfill$\circ$}
\newcommand{\cset}{\mathds{C}}
\newcommand{\nset}{\mathds{N}}
\newcommand{\rset}{\mathds{R}}
\newcommand{\diff}{\mathrm{d}}
\newcommand{\supp}{\mathrm{supp}\,}
\newcommand{\cN}{\mathcal{N}}
\newcommand{\cS}{\mathcal{S}}
\newcommand{\fS}{\mathfrak{S}}
\author{Philipp J.\ di Dio}
\address{Department of Mathematics and Statistics, University of Konstanz, Universit\"atsstra{\ss}e 10, D-78464 Konstanz, Germany}
\address{Zukunftskolleg, Universtity of Konstanz, Universit\"atsstra{\ss}e 10, D-78464 Konstanz, Germany}
\address{philipp.didio@uni-konstanz.de}
\journal{arXiv}
\title{Absolutely continuous representing\\ measures of complex sequences}
\begin{document}

\begin{abstract}
In 1989, A.\ J.\ Duran [\textit{Proc.\ Amer.\ Math.\ Soc.} \textbf{107} (1989), 731--741] showed, that for every complex sequence $(s_\alpha)_{\alpha\in\nset_0^n}$ there exists a Schwartz function $f\in\cS(\rset^n,\cset)$ with $\supp f\subseteq [0,\infty)^n$ such that $s_\alpha = \int x^\alpha\cdot f(x)~\diff x$ for all $\alpha\in\nset_0^n$.
It has been claimed to be a generalization of the result by T.\ Sherman [\textit{Rend.\ Circ.\ Mat.\ Palermo} \textbf{13} (1964), 273--278], that every complex sequences is represented by a complex measure on $[0,\infty)^n$.
In the present work we use the convolution of sequences and measures to show, that Duran's result is a \emph{trivial consequence} of Sherman's result.
We use our easy proof to extend the Schwartz function result and to show the flexibility in choosing very specific functions $f$.
\end{abstract}

\begin{keyword}
representing measure\sep complex sequence\sep absolutely continuous
\MSC[2020] Primary 44A60; Secondary 30E05, 26C05.
\end{keyword}

\maketitle

Let $n\in\nset$.
The \emph{Schwartz space} $\cS(\rset^n,\cset)$ consists of all smooth functions $f\in C^\infty(\rset^n,\cset)$ such that
\[\left\| x^\alpha\cdot \big(\partial^\beta f\big)(x) \right\|_\infty < \infty\]
for all $\alpha,\beta\in\nset_0^n$.

In 1989, A.\ J.\ Duran \cite{duran89} showed, that for every complex sequence $(s_\alpha)_{\alpha\in\nset_0^n}$ there exists a Schwartz function $f\in\cS(\rset^n,\cset)$ with $\supp f \subseteq [0,\infty)^n$ such that
\begin{equation}\label{eq:moments}
s_\alpha = \int_{\rset^n} x^\alpha\cdot f(x)~\diff x
\end{equation}
for all $\alpha\in\nset_0^n$.
This result has been claimed to be a generalization of T.\ Sherman's result \cite{sherman64}.
R.\ P.\ Boas \cite{boas39a} (for $n=1$) and T.\ Sherman (for all $n\in\nset)$ proved, that any complex sequence $(s_\alpha)_{\alpha\in\nset_0^n}$ can be represented by a complex representing measure $\mu$ with $|\mu|(\rset^n)<\infty$ and $\supp\mu \subseteq [0,\infty)^n$.

\begin{prop}[{\cite[Thm.\ 1]{sherman64}}]\label{prop:signedAtoms}
Let $n\in\nset$ and $s = (s_\alpha)_{\alpha\in\nset_0^n}$ be a complex sequence.
There exists a complex measure $\mu$ with $\supp\mu\subseteq [0,\infty)^n$ such that
\[  |\mu|(\rset^n)<\infty \qquad\text{and}\qquad s_\alpha = \int x^\alpha~\diff\mu(x)\]
for all $\alpha\in\nset_0^n$.
\end{prop}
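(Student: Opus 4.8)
The plan is to obtain $\mu$ as the push-forward of a well-chosen Schwartz density on $\rset^n$ under the coordinatewise squaring map: squaring automatically confines the support to $[0,\infty)^n$, while the freedom in choosing the density lets us match the arbitrary sequence $s$. The first ingredient is the observation that the moments of a Schwartz function are, up to unimodular constants, the Taylor coefficients at the origin of its Fourier transform. Writing $\widehat\phi(\xi):=\int_{\rset^n}e^{-i\langle x,\xi\rangle}\phi(x)\,\diff x$ (with $\cF^{-1}$ the corresponding inverse transform), one has $(\partial^\beta\widehat\phi)(0)=(-i)^{|\beta|}\int_{\rset^n}x^\beta\phi(x)\,\diff x$ for every $\beta\in\nset_0^n$ and every $\phi\in\cS(\rset^n,\cset)$; conversely, if $g\in\comp(\rset^n,\cset)$ then $\phi:=\cF^{-1}g$ lies in $\cS(\rset^n,\cset)$ (indeed it is entire, by Paley--Wiener) and $\int_{\rset^n}x^\beta\phi(x)\,\diff x=i^{|\beta|}(\partial^\beta g)(0)$. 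So prescribing the moments of a Schwartz function amounts to prescribing the derivatives at $0$ of a compactly supported smooth function.

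The second ingredient is Borel's theorem. Define $(c_\beta)_{\beta\in\nset_0^n}\subseteq\cset$ by $c_\beta:=s_{\beta/2}$ if every component of $\beta$ is even and $c_\beta:=0$ otherwise. By Borel's theorem in $n$ variables there is $\psi\in C^\infty(\rset^n,\cset)$ with $(\partial^\beta\psi)(0)=(-i)^{|\beta|}c_\beta$ for all $\beta\in\nset_0^n$. Fixing a cut-off $\chi\in\comp(\rset^n,\rset)$ with $\chi\equiv1$ on a neighbourhood of $0$ and setting $g:=\chi\psi\in\comp(\rset^n,\cset)$, $\phi:=\cF^{-1}g\in\cS(\rset^n,\cset)$, we get $\int_{\rset^n}x^\beta\phi(x)\,\diff x=c_\beta$ for every $\beta\in\nset_0^n$.

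Finally, push $\phi\,\diff x$ forward under $T\colon\rset^n\to[0,\infty)^n$, $T(y):=(y_1^2,\dots,y_n^2)$, and set $\mu:=T_*(\phi\,\diff x)$. Then $\supp\mu\subseteq[0,\infty)^n$ by construction; splitting $\rset^n$ into its $2^n$ orthants and substituting $x_i=y_i^2$ shows that $\mu$ has the density $x\mapsto 2^{-n}(x_1\cdots x_n)^{-1/2}\sum_{\varepsilon\in\{-1,1\}^n}\phi(\varepsilon_1\sqrt{x_1},\dots,\varepsilon_n\sqrt{x_n})$ on $(0,\infty)^n$, which is integrable with $|\mu|(\rset^n)\le\|\phi\|_{L^1(\rset^n)}<\infty$; and for every $\alpha\in\nset_0^n$
\[\int x^\alpha\,\diff\mu(x)=\int_{\rset^n}T(y)^\alpha\,\phi(y)\,\diff y=\int_{\rset^n}y^{2\alpha}\,\phi(y)\,\diff y=c_{2\alpha}=s_\alpha.\]

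The only things requiring care are bookkeeping: the constants in the first ingredient and the verification that $\cF^{-1}$ of a compactly supported smooth function is Schwartz, and the integrability of the push-forward density near the coordinate hyperplanes — so there is no genuine obstacle. I would also note that this argument proves slightly more than the statement asks for, namely that the representing measure may be taken absolutely continuous; the further smoothing to a bona fide Schwartz function, as in Duran's theorem, is exactly what the convolution argument of the present paper provides on top of this.
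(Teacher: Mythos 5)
Your proof is correct, but note first that the paper does not actually prove this proposition: it is imported as Sherman's theorem \cite{sherman64} and serves as the black box from which \Cref{cor:duran} (Duran's theorem) is deduced by convolution. Your argument is therefore a genuinely independent route, and it is closer in spirit to Duran's Fourier-analytic method than to anything in the present paper: you first realize an arbitrary prescription of moments by a Schwartz function $\phi$ on all of $\rset^n$ (the moments of $\phi$ are, up to powers of $i$, the derivatives of $\widehat{\phi}$ at $0$; Borel's lemma plus a cut-off prescribes those derivatives), planting $s$ on the even multi-indices, and then force the support into $[0,\infty)^n$ by pushing $\phi\,\diff x$ forward under $T(y)=(y_1^2,\dots,y_n^2)$. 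The details check out: $\chi\equiv 1$ near $0$ gives $(\partial^\beta(\chi\psi))(0)=(\partial^\beta\psi)(0)$; the bookkeeping $i^{|\beta|}(-i)^{|\beta|}=1$ is right; $|\mu|(\rset^n)\le\|\phi\|_{L^1}<\infty$; and the change-of-variables formula for the pushforward applies to the unbounded integrand $x^\alpha$ because $y^{2\alpha}\phi(y)\in L^1(\rset^n)$ ($\phi$ being Schwartz), giving $\int x^\alpha\,\diff\mu=c_{2\alpha}=s_\alpha$. As to what each approach buys: yours yields for free that $\mu$ may be taken absolutely continuous, though its density has $(x_1\cdots x_n)^{-1/2}$ singularities on the coordinate hyperplanes and is therefore not a Schwartz density, so Duran's theorem is not yet reached --- as you note yourself, the paper's convolution step (\Cref{lem:group}, \Cref{lem:measureConv}) is exactly the missing regularization; the paper instead keeps Sherman's statement as an elementary input, later even replaced via \Cref{prop:schmud} by atomic measures on any admissible countable set, so that all Fourier analysis disappears from the derivation of Duran's result, which is precisely its point.
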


Our simple proof to reduce Duran's result to Sherman's is based on the convolution of sequences and measures.
For the readers convenience, we give  the basic definition and properties, which will be needed for our proof.

\begin{dfn}\label{dfn:convolution}
Let $n\in\nset$.
Let $s=(s_\alpha)_{\alpha\in\nset_0^n}$ and $t=(t_\alpha)_{\alpha\in\nset_0^n}$ be two complex sequences.
We define the \emph{convolution} $s*t = (u_\alpha)_{\alpha\in\nset_0^n}$ of $s$ and $t$ by
\[u_\alpha := \sum_{\beta\preceq\alpha} \binom{\alpha}{\beta}\cdot s_\beta\cdot t_{\alpha-\beta}.\]
\end{dfn}

The following basic properties of the convolution have been long known, are simple to prove by direct computations from the \Cref{dfn:convolution}, and have been used in a stronger topological context in \cite[Sect.\ 3]{didio24posPresConst}.
Here, $\mu*\nu$ is the \emph{convolution} of the measures $\mu$ and $\nu$, see e.g.\ \cite[Sect.\ 3.9]{bogachevMeasureTheory}.

\begin{lem}\label{lem:group}
Let $n\in\nset$ and set
\[\fS := \left\{s=(s_\alpha)_{\alpha\in\nset_0^n}\in\cset^{\nset_0^n} \,\middle|\, s_0 \neq 0\right\}.\]
Then $(\fS,*)$ is a commutative group.
\end{lem}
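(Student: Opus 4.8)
\emph{Proof proposal.} The plan is to recognize $*$ as the multiplication of formal power series. To a complex sequence $s=(s_\alpha)_{\alpha\in\nset_0^n}$ associate its exponential generating function
\[ \hat s \;:=\; \sum_{\alpha\in\nset_0^n} \frac{s_\alpha}{\alpha!}\,x^\alpha \;\in\; \cset[[x_1,\dots,x_n]]. \]
This map $s\mapsto\hat s$ is a $\cset$-linear bijection onto the ring of formal power series, and, crucially, it intertwines the convolution of \Cref{dfn:convolution} with the Cauchy product: the coefficient of $x^\alpha$ in $\hat s\cdot\hat t$ is $\sum_{\beta\preceq\alpha}\frac{s_\beta\, t_{\alpha-\beta}}{\beta!\,(\alpha-\beta)!}$, which equals $\frac{1}{\alpha!}\sum_{\beta\preceq\alpha}\binom{\alpha}{\beta}s_\beta t_{\alpha-\beta}$ because $\binom{\alpha}{\beta}=\frac{\alpha!}{\beta!\,(\alpha-\beta)!}$; that is, $\widehat{s*t}=\hat s\cdot\hat t$. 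Hence $(\cset^{\nset_0^n},+,*)$ is a commutative ring with unit, the unit being the sequence $e$ with $e_0=1$ and $e_\alpha=0$ for $\alpha\neq 0$.

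Under this identification $\fS$ is exactly the set of power series with non-zero constant term, which is precisely the group of units of $\cset[[x_1,\dots,x_n]]$; since the units of any commutative ring form a commutative group under multiplication, $(\fS,*)$ is a commutative group. So most of the work is already contained in the (routine) verification that $s\mapsto\hat s$ is a ring isomorphism.

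If one wishes to stay entirely within the language of sequences, the same conclusion is reached by checking the group axioms by hand from \Cref{dfn:convolution}: closure holds because $(s*t)_0=s_0t_0\neq 0$; commutativity follows by the substitution $\beta\mapsto\alpha-\beta$ together with $\binom{\alpha}{\beta}=\binom{\alpha}{\alpha-\beta}$; associativity follows because both $(s*t)*r$ and $s*(t*r)$ expand to the symmetric triple sum $\sum_{\beta+\gamma+\delta=\alpha}\frac{\alpha!}{\beta!\,\gamma!\,\delta!}\,s_\beta t_\gamma r_\delta$; and the sequence $e$ above is neutral because only the summand $\beta=\alpha$ survives in $(s*e)_\alpha$. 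The one point needing genuine work is the existence of inverses: given $s\in\fS$, define $t$ by induction on $|\alpha|$ through $t_0:=s_0^{-1}$ and, for $\alpha\neq 0$,
\[ t_\alpha \;:=\; -\,\frac{1}{s_0}\sum_{0\prec\beta\preceq\alpha}\binom{\alpha}{\beta}\,s_\beta\, t_{\alpha-\beta}, \]
which is well posed since each $t_{\alpha-\beta}$ occurring on the right has $|\alpha-\beta|<|\alpha|$. By construction $(s*t)_\alpha=e_\alpha$ for all $\alpha$, i.e.\ $s*t=e$, and $t_0\neq 0$ shows $t\in\fS$; commutativity then gives $t*s=e$ as well. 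I expect this recursion, and the multinomial bookkeeping behind associativity, to be the only mildly delicate steps, and the formal power series picture of the first paragraph is really just a way of making both of them transparent.
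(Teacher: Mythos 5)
Your proof is correct, and it is exactly the kind of argument the paper has in mind: the paper gives no explicit proof of \Cref{lem:group}, stating only that it follows by direct computation from \Cref{dfn:convolution}, which is precisely your second, hands-on verification (closure via $(s*t)_0=s_0t_0$, the multinomial expansion for associativity, and the recursion $t_\alpha=-s_0^{-1}\sum_{0\prec\beta\preceq\alpha}\binom{\alpha}{\beta}s_\beta t_{\alpha-\beta}$ for inverses). Your exponential-generating-function reformulation, identifying $(\cset^{\nset_0^n},+,*)$ with $\cset[[x_1,\dots,x_n]]$ and $\fS$ with its unit group, is a clean and equally valid way to package the same computation.
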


\begin{lem}\label{lem:measureConv}
Let $n\in\nset$.
If $s\in\cset^{\nset_0^n}$ is represented by the complex measure $\mu$ and $t\in\cset^{\nset_0^n}$ is represented by the complex measure $\nu$ with $|\mu|(\rset^n),|\nu|(\rset^n)<\infty$, then $s*t$ is represented by $\mu*\nu$ with
\[|\mu*\nu|(\rset^n)<\infty \qquad\text{and}\qquad \supp(\mu*\nu)\subseteq \supp\mu + \supp\nu.\]
\end{lem}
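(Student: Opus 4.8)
The plan is to read all three assertions straight off the definition of the measure convolution and the (coordinatewise) binomial theorem; nothing deep is involved. Recall that $\mu*\nu$ is the complex measure determined by $(\mu*\nu)(E)=\int_{\rset^n}\int_{\rset^n}\one_E(x+y)\,\diff\mu(x)\,\diff\nu(y)$ for Borel $E$, and that by the usual approximation argument this relation extends to $\int g\,\diff(\mu*\nu)=\int\int g(x+y)\,\diff\mu(x)\,\diff\nu(y)$ for every Borel $g$ such that $(x,y)\mapsto g(x+y)$ lies in $L^1(|\mu|\otimes|\nu|)$ (cf.\ \cite[Sect.\ 3.9]{bogachevMeasureTheory}). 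The single estimate that carries the whole proof is
\[
|\mu*\nu|(E)\;\le\;\int_{\rset^n}\int_{\rset^n}\one_E(x+y)\,\diff|\mu|(x)\,\diff|\nu|(y)\;=\;(|\mu|*|\nu|)(E)\qquad\text{for all Borel }E,
\]
which I would get by choosing an arbitrary finite Borel partition $(E_j)_j$ of $E$, applying $|\int\int h\,\diff\mu\,\diff\nu|\le\int\int|h|\,\diff|\mu|\,\diff|\nu|$ to $h(x,y)=\one_{E_j}(x+y)$, summing over $j$, and invoking Tonelli.

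Two of the three claims are then immediate. Taking $E=\rset^n$ and using $\sum_j\one_{E_j}\equiv1$ gives $|\mu*\nu|(\rset^n)\le|\mu|(\rset^n)\cdot|\nu|(\rset^n)<\infty$. For the support, let $U\subseteq\rset^n$ be open with $U\cap(\supp\mu+\supp\nu)=\emptyset$; then $\{(x,y):x+y\in U\}$ is disjoint from $\supp\mu\times\supp\nu$ and hence $|\mu|\otimes|\nu|$-null, so the estimate yields $|\mu*\nu|(U)=0$. As this holds for every such $U$, we get $\supp(\mu*\nu)\subseteq\overline{\supp\mu+\supp\nu}$, which is the claimed inclusion whenever $\supp\mu+\supp\nu$ is closed --- in particular when $\supp\mu,\supp\nu\subseteq[0,\infty)^n$, the only case needed afterwards; I would state this caveat explicitly rather than hide the closure.

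For the remaining (representation) claim, fix $\alpha\in\nset_0^n$ and expand $(x+y)^\alpha=\prod_{i=1}^n(x_i+y_i)^{\alpha_i}=\sum_{\beta\preceq\alpha}\binom{\alpha}{\beta}x^\beta y^{\alpha-\beta}$ by the binomial theorem in each coordinate. The step I expect to cost the most care --- and the only point where the hypothesis that $\mu,\nu$ \emph{represent} $s,t$ is used beyond finiteness of the total variations --- is verifying $(x+y)^\alpha\in L^1(|\mu|\otimes|\nu|)$: from $|(x_i+y_i)^{\alpha_i}|\le(|x_i|+|y_i|)^{\alpha_i}$ one bounds $\int\int|(x+y)^\alpha|\,\diff|\mu|\,\diff|\nu|$ by $\sum_{\beta\preceq\alpha}\binom{\alpha}{\beta}\big(\int|x^\beta|\,\diff|\mu|(x)\big)\big(\int|y^{\alpha-\beta}|\,\diff|\nu|(y)\big)$, each factor being finite exactly because $s_\beta=\int x^\beta\,\diff\mu$ and $t_{\alpha-\beta}=\int y^{\alpha-\beta}\,\diff\nu$ exist as (finite) numbers. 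Granting this, Fubini permits integrating the expansion term by term, so that
\[
\int_{\rset^n}x^\alpha\,\diff(\mu*\nu)(x)=\int\int(x+y)^\alpha\,\diff\mu(x)\,\diff\nu(y)=\sum_{\beta\preceq\alpha}\binom{\alpha}{\beta}\,s_\beta\,t_{\alpha-\beta}=(s*t)_\alpha
\]
for every $\alpha\in\nset_0^n$, which is precisely \Cref{dfn:convolution}. Beyond this integrability check, the proof is bookkeeping with binomial coefficients and Tonelli's theorem.
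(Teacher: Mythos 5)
Your proof is correct and is exactly the direct computation the paper intends: it states \Cref{lem:measureConv} without proof as being ``simple to prove by direct computations'' from \Cref{dfn:convolution}, and your argument --- the bound $|\mu*\nu|(E)\le(|\mu|*|\nu|)(E)$, the integrability check via the coordinatewise binomial expansion, and Fubini --- is precisely that computation. Your caveat about the support is a legitimate sharpening rather than a flaw: in general one only obtains $\supp(\mu*\nu)\subseteq\overline{\supp\mu+\supp\nu}$ (the sum of closed sets need not be closed), and the inclusion as literally stated is recovered in all of the paper's applications because there $\supp\mu,\supp\nu\subseteq[0,\infty)^n$, where the sum of closed sets is closed.
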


Note, if $\mu$ and $\nu$ are measures, i.e., they are positive, then $\supp(\mu*\nu) = \supp\mu + \supp\nu$.
We now collected all preliminaries to show, that Duran's result is a trivial consequence of Sherman's.
We therefore formulate it as a corollary.

\begin{cor}[{\cite[p.\ 731, Theorem]{duran89}}]\label{cor:duran}
Let $n\in\nset$ and let $s = (s_\alpha)_{\alpha\in\nset_0^n}$ be a complex sequence.
There exists a Schwartz function $f\in\cS(\rset^n,\cset)$ such that
\begin{equation}\label{eq:signedMoments}
\supp f \subseteq [0,\infty)^n \qquad\text{and}\qquad s_\alpha = \int x^\alpha\cdot f(x)~\diff x
\end{equation}
for all $\alpha\in\nset_0^n$.
\end{cor}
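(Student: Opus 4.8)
The plan is to realise $f$ as the convolution of one fixed, explicit Schwartz function with a complex measure supplied by Sherman's \Cref{prop:signedAtoms}, using the group law of \Cref{lem:group} to make the moments come out right. First I would fix a Schwartz function $g\in\cS(\rset^n,\cset)$ with $\supp g\subseteq[0,\infty)^n$ and $\int_{\rset^n}g(x)\,\diff x\neq 0$; concretely $g(x)=\prod_{i=1}^n\psi(x_i)$ works, where $\psi(u)=\exp(-u-u^{-1})$ for $u>0$ and $\psi(u)=0$ for $u\le 0$, which is Schwartz on $\rset$ and supported in $[0,\infty)$ since all its derivatives vanish at $0$ and $\psi^{(k)}$ decays faster than every polynomial as $u\to\infty$. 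Let $t=(t_\alpha)_{\alpha}$ be the moment sequence of $g$; then $t_0=\int g\,\diff x\neq 0$, so $t\in\fS$ and $t^{-1}\in\fS$ exists by \Cref{lem:group}.

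Next I would apply \Cref{prop:signedAtoms} to the complex sequence $s*t^{-1}$, obtaining a complex measure $\nu$ with $\supp\nu\subseteq[0,\infty)^n$, $|\nu|(\rset^n)<\infty$ and $(s*t^{-1})_\alpha=\int x^\alpha\,\diff\nu$ for all $\alpha$, and then set $f:=\nu*g$, i.e.\ $f(x)=\int_{\rset^n}g(x-y)\,\diff\nu(y)$. A Fubini argument gives $\nu*(g\,\diff x)=f\,\diff x$, so by \Cref{lem:measureConv} (applicable since $|\nu|(\rset^n)$ and $\|g\|_{L^1}$ are finite) the measure $f\,\diff x$ represents $(s*t^{-1})*t=s$, and $\supp f\subseteq\supp\nu+\supp g\subseteq[0,\infty)^n$; the identity $(s*t^{-1})*t=s$ is immediate from \Cref{dfn:convolution} and \Cref{lem:group}. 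Thus \eqref{eq:signedMoments} holds, and only $f\in\cS(\rset^n,\cset)$ remains to be checked.

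That check is the sole nontrivial point, and its crux is the observation that a measure representing a sequence automatically has rapidly decreasing total variation: since $x^\alpha\in L^1(|\nu|)$ for every $\alpha$ — which is exactly what "$(s*t^{-1})_\alpha=\int x^\alpha\,\diff\nu$" means — and $\supp\nu\subseteq[0,\infty)^n$, on which $\|y\|^N$ is bounded by a polynomial with nonnegative coefficients, one gets $M_N:=\int\|y\|^N\,\diff|\nu|(y)<\infty$ for every $N\in\nset_0$. With that, $f$ is $C^\infty$ with $\partial^\beta f=\nu*(\partial^\beta g)$ (differentiation under the integral, legitimate because $|\nu|$ is finite and each $\partial^\beta g$ is bounded), and from $\|x\|\le\|x-y\|+\|y\|$ one estimates, for all $\alpha,\beta\in\nset_0^n$,
\begin{align*}
\sup_{x}\big|x^\alpha(\partial^\beta f)(x)\big|
&\le 2^{|\alpha|}\sup_{x}\int\big(\|x-y\|^{|\alpha|}+\|y\|^{|\alpha|}\big)\,\big|(\partial^\beta g)(x-y)\big|\,\diff|\nu|(y)\\
&\le 2^{|\alpha|}\Big(\sup_{z}\|z\|^{|\alpha|}|(\partial^\beta g)(z)|\cdot|\nu|(\rset^n)+\sup_{z}|(\partial^\beta g)(z)|\cdot M_{|\alpha|}\Big)<\infty,
\end{align*}
the two suprema being finite because $g$ is Schwartz. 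Hence $f\in\cS(\rset^n,\cset)$ and the corollary follows. I expect the only real obstacle to be a conceptual one — noticing that Sherman's measure, precisely because it has finite moments of every order, decays fast enough that convolving it with a fixed Schwartz function supported in $[0,\infty)^n$ stays inside the Schwartz class — with everything else being routine manipulation of \Cref{dfn:convolution}, \Cref{lem:group} and \Cref{lem:measureConv}.
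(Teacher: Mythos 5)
Your proposal is correct and follows essentially the same route as the paper: you decompose $s = t*(t^{-1}*s)$ via \Cref{lem:group}, apply \Cref{prop:signedAtoms} to $t^{-1}*s$, and take $f=g*\nu$, exactly as in the paper's proof. The only differences are cosmetic: you pick a non-compactly supported Schwartz $g$ with $\supp g\subseteq[0,\infty)^n$ instead of the paper's bump supported in $[0,1]^n$, and you write out the ``straightforward calculation'' (finiteness of all moments of $|\nu|$ plus the $\|x\|\le\|x-y\|+\|y\|$ estimate) that the paper only remarks on after \eqref{eq:gConvMu}.
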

\begin{proof}
Let $g\in C^\infty(\rset^n,\rset)\setminus\{0\}$ with $g\geq 0$ and $\supp g \subseteq [0,1]^n$.
For all $\alpha\in\nset_0^n$,
\[t_\alpha := \int x^\alpha\cdot g(x)~\diff x\]
and $t := (t_\alpha)_{\alpha\in\nset_0^n}$.
Since $g\geq 0$ and $g\neq 0$, $t_0>0$.
By \Cref{prop:signedAtoms}, let $\mu$ be a representing measure of the complex sequence $t^{-1}*s$ with $\supp\mu\subseteq [0,\infty)^n$.
Since
\begin{equation}\label{eq:gConvMu}
g*\mu\in\cS(\rset^n,\cset)\qquad\text{with}\qquad \supp (g*\mu)\subseteq [0,\infty)^n
\end{equation}
and by
\begin{equation}\label{eq:simpleDuran}
s \;\overset{\text{Lem.\ \ref{lem:group}}}{=}\; \underbrace{t * (\,\underbrace{t^{-1} * s}_{\text{Prop.\ \ref{prop:signedAtoms}}}\,)}_{\text{Lem.\ \ref{lem:measureConv}}},
\end{equation}
$f:=g*\mu$ fulfills (\ref{eq:signedMoments}).
\end{proof}

Equation (\ref{eq:gConvMu}) is a straightforward calculation, since all moments of $|\mu|$ exist.

In summary, the whole proof of \Cref{cor:duran} and hence \cite{duran89} reduces solely to (\ref{eq:simpleDuran}).
All technical difficulties and details (Fourier transform, coefficients of Schwartz functions, characterization of a certain space $\Omega_0$ of analytic functions) are removed.
Equation (\ref{eq:simpleDuran}) reduces \cite{duran89} to a \emph{trivial consequence} of \cite{sherman64}.

Equation (\ref{eq:simpleDuran}) also reveals the great flexibility, which we have in choosing $f$.
We can extend \Cref{cor:duran} and allow greater flexibility in $g$ and $\mu$.
The flexibility in the representing measure $\mu$ comes from the following.

\begin{prop}[{\cite[Thm.\ 5]{schmud25signed}}]\label{prop:schmud}
Let $n\in\nset$ and let $K\subseteq\rset^n$ be closed.
Then the following are equivalent:
\begin{enumerate}[(i)]
\item For each complex sequence $(s_\alpha)_{\alpha\in\nset_0^n}$ there exists a complex measure $\mu$ with
\[|\mu|(\rset^n)<\infty,\qquad \supp\mu\subseteq K, \qquad\text{and}\qquad s_\alpha = \int x^\alpha~\diff\mu(x)\]
for all $\alpha\in\nset_0^n$.

\item $K$ is Zariski dense and
\[\cN_d(K) := \left\{ p\in\rset[x_1,\dots,x_n] \,\middle|\, \sup_{x\in K} \frac{|p(x)|}{(1+|x|^2)^d} < \infty \right\}\]
is finite dimensional for all $d\in\nset_0$.
\end{enumerate}
\end{prop}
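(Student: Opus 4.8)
The plan is to prove the two implications separately; $(i)\Rightarrow(ii)$ is the easy ``necessity'' direction and $(ii)\Rightarrow(i)$ the substantial ``sufficiency'' one. For $(i)\Rightarrow(ii)$ I would argue by contraposition. Zariski density: if some $p\in\rset[x_1,\dots,x_n]\setminus\{0\}$ vanished on $K$, choose $\beta$ with $p_\beta\neq 0$ and let $s$ be the sequence with $s_\beta=1$ and $s_\alpha=0$ otherwise; any complex measure $\mu$ with $\supp\mu\subseteq K$ representing $s$ would give $p_\beta=\sum_\alpha p_\alpha s_\alpha=\int p\,\diff\mu=0$, a contradiction. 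Finite-dimensionality of $\cN_d(K)$: if $\dim\cN_D(K)=\infty$ for some $D$, then, since $\cN_D(K)$ intersects each space of polynomials of bounded degree in a finite-dimensional subspace, it contains polynomials $p^{(1)},p^{(2)},\dots$ of strictly increasing total degree, normalized so that $\sup_{x\in K}|p^{(k)}(x)|/(1+|x|^2)^D=1$ (the supremum is positive by the Zariski density just shown). Because the degrees strictly increase, the coefficients of a single complex sequence $s$ can be chosen inductively, block by block of increasing total degree, so that $\sum_\alpha p^{(k)}_\alpha s_\alpha=k$ for every $k$. If $s$ had a representing measure $\mu$ on $K$ then all moments of $|\mu|$ would be finite, hence $C:=\int(1+|x|^2)^D\,\diff|\mu|<\infty$, and then $k=\bigl|\int p^{(k)}\,\diff\mu\bigr|\le\int_K|p^{(k)}|\,\diff|\mu|\le C$ for every $k$ --- impossible.

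For $(ii)\Rightarrow(i)$, fix a complex sequence $s$ and let $L(p):=\sum_\alpha p_\alpha s_\alpha$ be the induced linear functional on $\cset[x_1,\dots,x_n]$. The idea is to solve the problem on each finite-dimensional piece $\cN_d(K)$ and then to patch the solutions together. For fixed $d$, Zariski density makes $p\mapsto p(x)/(1+|x|^2)^{d+1}$ an injective linear map of the finite-dimensional space $\cN_d(K)$ into $C_0(K)$; a linear functional on a finite-dimensional normed space being automatically bounded, $L|_{\cN_d(K)}$ transports to a bounded functional on a finite-dimensional, hence closed, subspace of $C_0(K)$, which I extend to $C_0(K)$ by Hahn--Banach and realise by the Riesz representation theorem as integration against a finite complex measure on $K$; multiplying that measure by $(1+|x|^2)^{-(d+1)}$ gives a finite complex measure $\mu_d$ on $K$ with $\int p\,\diff\mu_d=L(p)$ for all $p\in\cN_d(K)$, in particular $s_\alpha=\int x^\alpha\,\diff\mu_d$ whenever $|\alpha|\le 2d$. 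To merge these into one measure I would use that (ii) is inherited by $K\setminus B_R$ for every ball $B_R$ --- a nonzero polynomial $q$ vanishing on $K\setminus B_R$ would be bounded on $K$ and, together with all its multiples $x^\gamma q$, would lie in $\cN_0(K)$, contradicting $\dim\cN_0(K)<\infty$ --- so that any finite family of monomials is linearly independent on the part of $K$ outside any ball, and hence has an invertible evaluation matrix at suitably chosen points there. This makes available a Boas--Sherman-type explicit construction: place point masses at a sparse sequence $\xi_j\in K$ with $|\xi_j|\to\infty$, chosen inductively, fixing the next unmatched moment $s_\alpha$ at each stage by a block of new atoms whose contributions to the already matched moments vanish while their contribution to $s_\alpha$ makes up the deficit, and keeping the weights $c_j$ so small at every stage that $\sum_j|c_j|\,|\xi_j|^m<\infty$ for all $m$; then $\mu:=\sum_j c_j\delta_{\xi_j}$ has $\supp\mu\subseteq K$, finite total variation, all moments finite, and represents $s$.

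The main obstacle is this last assembly step. The truncation measures $\mu_d$ cannot be chosen with total variations bounded uniformly in $d$ --- for a fast-growing $s$ the norm of $L$ on $\cN_d(K)$ grows without bound --- so one cannot simply extract a weak-$*$ cluster point of $(\mu_d)$ in $C_0(K)^{*}$, and even if one could, a weak-$*$ limit need not retain the moments of $s$ because mass may escape to infinity. The real content of $(ii)\Rightarrow(i)$ is therefore the convergence bookkeeping in the inductive atomic construction --- how far out and how small to make the newly added atoms at every stage --- together with the verification that the non-degeneracy needed at each stage is precisely what finite-dimensionality of the spaces $\cN_d(K)$ delivers.
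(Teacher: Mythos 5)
The paper does not prove this proposition at all: it is imported verbatim from Schm\"udgen \cite[Thm.\ 5]{schmud25signed} and used as a black box, so there is no internal proof to compare with, and your attempt has to stand on its own as a proof of Schm\"udgen's theorem. Your necessity direction $(i)\Rightarrow(ii)$ is essentially sound: the Zariski-density argument is correct, and for finite-dimensionality the choice of $p^{(k)}\in\cN_D(K)$ of strictly increasing degree, normalized by $\sup_{x\in K}|p^{(k)}(x)|/(1+|x|^2)^D=1$, together with the block-by-block construction of $s$ with $\sum_\alpha p^{(k)}_\alpha s_\alpha=k$, does contradict $\int(1+|x|^2)^D\,\diff|\mu|<\infty$ (legitimate, since (i) presupposes that every monomial is $|\mu|$-integrable).

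The genuine gap is the whole implication $(ii)\Rightarrow(i)$, and you concede it yourself. The Hahn--Banach/Riesz construction of the truncation measures $\mu_d$ is (rightly) abandoned, and what replaces it is only a declared intention: put atoms at sparse points $\xi_j\in K$, annihilate the already matched moments, fix the next one, and ``keep the weights small''. Qualitative solvability is not the problem --- your observation that $K\setminus B_R$ stays Zariski dense via $x^\gamma q\in\cN_0(K)$ is correct and does give invertible evaluation matrices --- the problem is quantitative: at stage $k$ you must invert an evaluation matrix at far-out points of $K$, and nothing in your sketch bounds the size of that inverse, hence of the weights $c_j$ needed to cancel the earlier moments and meet the current deficit. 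If those weights grow too fast, $\sum_j|c_j|\,|\xi_j|^m$ diverges for large $m$, destroying precisely the conclusions of (i) (finite total variation and existence of all moments). The hypothesis $\dim\cN_d(K)<\infty$ must therefore be used quantitatively --- e.g.\ via compactness to get lower bounds of the form $\inf\{\sup_{x\in K\setminus B_R}|p(x)|/(1+|x|^2)^d\}>0$ over normalized polynomials of bounded degree, which is what controls those inverses --- and your proposal never extracts any such estimate nor organizes the induction so that the resulting series converges. That ``convergence bookkeeping'' you defer is not a routine verification; it is the actual content of the theorem, so the hard direction remains unproved.
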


\begin{exm}
The set $K = \nset_0^n$ fulfills condition (ii) in \Cref{prop:schmud}.
\exmsymbol
\end{exm}

Since we can find complex \emph{atomic} representing measures for any complex sequence, we get the following generalization of \Cref{cor:duran}.

\begin{thm}\label{thm:main}
Let $n\in\nset$, let $s = (s_\alpha)_{\alpha\in\nset_0^n}\in\cset^{\nset_0^n}$ be a complex sequence, and let $g:\rset^n\to\rset$ be a measurable function such that
\[\int g(x)~\diff x \neq 0 \qquad\text{and}\qquad \int |x^\alpha\cdot g(x)|~\diff x < \infty\]
for all $\alpha\in\nset_0^n$.
Let $K$ be a countable set of points in $\rset^n$ such that $K$ is Zariski dense and $\dim\cN_d(K) < \infty$ for all $d\in\nset_0$.

Then, for all $y\in K$, there exist coefficients $c_y\in\cset$ such that
\[f(x) := \sum_{y\in K} c_y\cdot g(x - y)\]
is measurable with
\[s_\alpha = \int x^\alpha\cdot f(x)~\diff x \qquad\text{and}\qquad \int |x^\alpha\cdot f(x)|~\diff x < \infty\]
for all $\alpha\in\nset_0^n$.
\end{thm}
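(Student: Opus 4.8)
The plan is to run the convolution argument from the proof of \Cref{cor:duran}, with \Cref{prop:signedAtoms} replaced by \Cref{prop:schmud} and the Schwartz conclusion relaxed to the stated integrability. First I would set $t_\alpha := \int x^\alpha\cdot g(x)~\diff x$ for $\alpha\in\nset_0^n$; these are finite numbers by the hypothesis on $g$, and $t_0 = \int g(x)~\diff x\neq 0$, so $t := (t_\alpha)_{\alpha\in\nset_0^n}\in\fS$ and $t^{-1}$ exists by \Cref{lem:group}. The signed measure $\diff\mu_g := g(x)~\diff x$ satisfies $|\mu_g|(\rset^n) = \int|g(x)|~\diff x < \infty$ and represents $t$. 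Since $K$ satisfies condition~(ii) of \Cref{prop:schmud}, condition~(i) provides a complex measure $\mu$ with $|\mu|(\rset^n)<\infty$, $\supp\mu\subseteq K$, and $\int x^\alpha~\diff\mu(x) = (t^{-1}*s)_\alpha$ for all $\alpha$. As $K$ is countable, $\mu$ is purely atomic, i.e., $\mu = \sum_{y\in K} c_y\cdot\delta_y$ with $c_y\in\cset$ and $\sum_{y\in K}|c_y| = |\mu|(\rset^n) < \infty$; moreover, since each integral $\int x^\alpha~\diff\mu$ is well defined, $x^\alpha\in L^1(|\mu|)$, i.e., $\sum_{y\in K}|c_y|\cdot|y^\alpha| < \infty$ for every $\alpha\in\nset_0^n$.

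Next I would form the convolution $\mu_g*\mu$. By \Cref{lem:measureConv} it represents $t*(t^{-1}*s)$, which equals $s$ by \Cref{lem:group}, with $|\mu_g*\mu|(\rset^n)<\infty$. Since $\mu_g$ is absolutely continuous with respect to Lebesgue measure, so is $\mu_g*\mu$, and the estimate $\int\!\!\int|g(x-y)|~\diff x~\diff|\mu|(y) = \big(\int|g(x)|~\diff x\big)\cdot|\mu|(\rset^n) < \infty$ both justifies Fubini and shows that $\sum_{y\in K}|c_y|\cdot|g(\cdot-y)| \in L^1(\rset^n)$; hence the series
\[ f(x) \;:=\; \int g(x-y)~\diff\mu(y) \;=\; \sum_{y\in K} c_y\cdot g(x-y) \]
converges absolutely for Lebesgue-a.e.\ $x$, is measurable, and is the density of $\mu_g*\mu$ with respect to Lebesgue measure. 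Consequently $\diff|\mu_g*\mu| = |f(x)|~\diff x$, and therefore, for all $\alpha\in\nset_0^n$,
\[ \int x^\alpha\cdot f(x)~\diff x = \int x^\alpha~\diff(\mu_g*\mu)(x) = s_\alpha \quad\text{and}\quad \int|x^\alpha\cdot f(x)|~\diff x = \int|x^\alpha|~\diff|\mu_g*\mu|(x) < \infty, \]
which is the assertion. (Should one prefer not to invoke integrability of $x^\alpha$ against $|\mu_g*\mu|$, the finiteness of $\int|x^\alpha\cdot f(x)|~\diff x$ follows directly from the bound $\sum_{\beta\preceq\alpha}\binom{\alpha}{\beta}\big(\int|u^\beta\cdot g(u)|~\diff u\big)\sum_{y\in K}|c_y|\cdot|y^{\alpha-\beta}|$, obtained by expanding $|(u+y)^\alpha|$ binomially and using the two finiteness facts from the first paragraph.)

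I do not expect a genuine obstacle: the heart of the matter is again the one-line identity $s = t*(t^{-1}*s)$, exactly as in \Cref{cor:duran}. The two points that need a little care are, first, the observation that a complex measure supported on the countable set $K$ is automatically atomic and that ``$\mu$ represents $t^{-1}*s$'' already encodes $x^\alpha\in L^1(|\mu|)$ for every $\alpha$ — this is what upgrades ``$|\mu|(\rset^n)<\infty$'' to ``all absolute moments of $|\mu|$ are finite'', which is exactly what forces $f$ to have finite absolute moments; and second, the interchange of summation and integration, equivalently the absolute continuity of $\mu_g*\mu$ together with the identification of its Lebesgue density as $f$, both of which are covered by the single Fubini estimate above.
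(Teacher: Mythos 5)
Your proposal is correct and follows exactly the route the paper takes: the paper's own proof of this theorem is literally ``verbatim the proof of \Cref{cor:duran}, especially (\ref{eq:simpleDuran}), with \Cref{prop:schmud} in place of \Cref{prop:signedAtoms}'', and you have carried out precisely that substitution, additionally spelling out the details (atomicity of $\mu$ on the countable set $K$, the Fubini estimate identifying $g*\mu$ with the series, and the binomial bound for the absolute moments) that the paper leaves implicit.
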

\begin{proof}
Verbatim the same proof as the proof of \Cref{cor:duran}, especially equation (\ref{eq:simpleDuran}), but with \Cref{prop:schmud} instead of \Cref{prop:signedAtoms}.
\end{proof}

The regularity of $f = g * \mu$ is inherited from $g$, as is well-known and used with mollifiers in partial differential equations and test functions in distributions.

\begin{exm}
\begin{enumerate}[\bfseries (a)]
\item If $g\in C^\infty(\rset^n,\rset)$ with $\supp g \subseteq [0,1]^n$ in \Cref{thm:main}, then $f = g*\mu \in \cS(\rset^n,\cset)$ with $\supp f \subseteq [0,\infty)^n$, i.e., we regain \Cref{cor:duran}.

\item If $g$ in \Cref{thm:main} is a step function, then $f$ is a step function with $\supp f\subseteq [0,\infty)^n$.
For example, let $a\geq 0$ and let $g$ be the characteristic function of $[a,a+1)^n$.
Then $\supp f \subseteq [a,\infty)^n$.
\exmsymbol
\end{enumerate}
\end{exm}

\begin{rem}
All presented techniques (Lemmas \ref{lem:group} and \ref{lem:measureConv}) were already well-known and on a textbook level in 1964, when T.\ Sherman's result \cite{sherman64} appeared.
\exmsymbol
\end{rem}

\section*{Funding}

The author is supported by the Deutsche Forschungs\-gemein\-schaft DFG with the grant DI-2780/2-1 and his research fellowship at the Zukunfts\-kolleg of the University of Konstanz, funded as part of the Excellence Strategy of the German Federal and State Government.


\begin{thebibliography}{Dur89}

\bibitem[Boa39]{boas39a}
R.~P. Boas, \emph{The {S}tieltjes moment problem for functions of bounded
  variation}, Bull.\ Amer.\ Math.\ Soc. \textbf{45} (1939), 399--404.

\bibitem[Bog07]{bogachevMeasureTheory}
V.~I. Bogachev, \emph{Measure {T}heory}, Springer-Verlag, Berlin, 2007.

\bibitem[dD24]{didio24posPresConst}
P.~J. di~Dio, \emph{On {P}ositivity {P}reservers with constant {C}oefficients
  and their {G}enerators}, J.~Alg. \textbf{660} (2024), 882--907.

\bibitem[Dur89]{duran89}
A.~J. Duran, \emph{The {S}tieltjes {M}oments {P}roblem for {R}apidly
  {D}ecreasing {F}unctions}, Proc.\ Amer.\ Math.\ Soc. \textbf{107} (1989),
  731--741.

\bibitem[Sch25]{schmud25signed}
K.~Schm\"{u}dgen, \emph{On moment functionals with signed representing
  measures}, Expo.\ Math. (2025), 125657.

\bibitem[She64]{sherman64}
T.~Sherman, \emph{A {M}oment {P}roblem on $\rset^n$}, Rend.\ Circ.\ Mat.\
  Palermo \textbf{13} (1964), 273--278.

\end{thebibliography}

\providecommand{\bysame}{\leavevmode\hbox to3em{\hrulefill}\thinspace}
\providecommand{\MR}{\relax\ifhmode\unskip\space\fi MR }

\end{document}